\newtheorem{theorem}{Theorem}
\theoremstyle{plain}
\newtheorem{corollary}{Corollary}
\newtheorem{lemma}{Lemma}
\newtheorem{problem}{Problem}
\newtheorem{remark}{Remark}
\numberwithin{equation}{section}
\begin{document}

\title[New Solutions for The weighted Fermat-Torricelli problem]{Two new analytical solutions and two new geometrical solutions for the weighted Fermat-Torricelli problem in the Euclidean Plane}
\author{Anastasios N. Zachos}
\address{University of Patras, Department of Mathematics, GR-26500 Rion, Greece}
\email{azachos@gmail.com} \keywords{weighted Fermat-Torricelli
problem, weighted Fermat-Torricelli point} \subjclass{51M04,
51E10}

\begin{abstract}
We obtain two analytic solutions for the weighted
Fermat-Torricelli problem in the Euclidean Plane which states
that: Given three points in the Euclidean plane and a positive
real number (weight) which correspond to each point, find the
point such that the sum of the weighted distances to these three
points is minimized.
 Furthermore, we give two new geometrical solutions  for the
the weighted Fermat-Torricelli problem (weighted Fermat-Torricelli
point), by using the floating equilibrium condition of the
weighted Fermat-Torricelli problem (first geometric solution) and
a generalization of Hofmann's rotation proof under the condition
of equality of two given weights (second geometric solution).
\end{abstract}\maketitle

\section{Introduction}

We state the weighted Fermat-Torricelli problem in
$\mathbb{R}^{2}:$

\begin{problem}
Given a triangle $\triangle A_{1}A_{2}A_{3}$ with vertices
$A_{1}=(x_{1},y_{1}),$ $A_{2}=(x_{2},y_{2}),$
$A_{3}=(x_{3},y_{3}),$ find a fourth point $A_{F}=(x_{F},y_{F})$
which minimizes the objective function

\begin{equation}\label{obj1}
f(x,y)=\sum_{i=1}^{3}B_{i}\sqrt{(x-x_{j})^{2}+(y-y_{j})^{2}}
\end{equation}

where $B_{i}$ is a positive real number (weight) which corresponds
to $A_{i}.$

\end{problem}

By replacing $B_{1}=B_{2}=B_{3}$ in (\ref{obj1}), we obtain the
(unweighted) Fermat-Torricelli problem which was first stated by
Pierre de Fermat (1643).

The solution of the weighted Fermat-Torricelli problem (Problem~1)
is called the weighted Fermat-Torricelli point $A_{F}.$

The existence and uniqueness of the weighted Fermat-Torricelli
point and a complete characterization of the "floating case" and
"absorbed case" has been established by Y. S Kupitz and H. Martini
(see \cite{Kup/Mar:97}, theorem 1.1, reformulation 1.2 page 58,
theorem 8.5 page 76, 77). A particular case of this result for
three non-collinear points in $\mathbb{R}^{2},$ is given by the
following theorem:

\begin{theorem}{\cite{BolMa/So:99},\cite{Kup/Mar:97}}\label{theor1}
Let there be given a triangle $\triangle A_{1}A_{2}A_{3},$ $A_{1},
A_{2}, A_{3}\in\mathbb{R}^{2}$ with corresponding positive
weights $B_{1}, B_{2}, B_{3}.$ \\
(a) The weighted Fermat-Torricelli point $A_{F}$ exists and is
unique. \\
(b) If for each point $A_{i}\in\{A_{1},A_{2},A_{3}\}$

\begin{equation}\label{floatingcase}
\|{\sum_{j=1, i\ne j}^{3}B_{j}\vec u(A_i,A_j)}\|>B_i,
\end{equation}

 for $i,j=1,2,3$  holds,
 then \\
 ($b_{1}$) the weighted Fermat-Torricelli point $A_{F}$ (weighted floating equilibrium point) does not belong to $\{A_{1},A_{2},A_{3}\}$
 and \\
 ($b_{2}$)

\begin{equation}\label{floatingequlcond}
 \sum_{i=1}^{3}B_{i}\vec u(A_F,A_i)=\vec 0,
\end{equation}
where $\vec u(A_{k} ,A_{l})$ is the unit vector from $A_{k}$ to
$A_{l},$ for $k,l\in\{0,1,2,3\}$
 (Weighted Floating Case).\\
 (c) If there is a point $A_{i}\in\{A_{1},A_{2},A_{3}\}$
 satisfying
 \begin{equation}
 \|{\sum_{j=1,i\ne j}^{3}B_{j}\vec u(A_i,A_j)}\|\le B_i,
\end{equation}
then the weighted Fermat-Torricelli point $A_{F}$ (weighted
absorbed point) coincides with the point $A_{i}$ (Weighted
Absorbed Case).
\end{theorem}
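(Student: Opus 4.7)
The plan is to prove the three claims in the natural order (a), (b), (c), using the framework of convex analysis applied to the objective function $f$ of \eqref{obj1}. I would write $f(P)=\sum_{i=1}^3 B_i \|P-A_i\|$ with $P=(x,y)\in\mathbb{R}^2$, and treat each summand as a convex, positively homogeneous translate of the Euclidean norm, which is smooth on $\mathbb{R}^2\setminus\{A_i\}$ with gradient $\vec u(A_i,P)$ and with subdifferential equal to $B_i\overline{B}(0,1)$ at $P=A_i$.

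For part (a), existence follows because $f$ is continuous and coercive on $\mathbb{R}^2$: since every $B_i>0$ and the three $A_i$ are bounded, $f(P)\to\infty$ as $\|P\|\to\infty$, so the sublevel sets are compact and a minimizer exists. For uniqueness I would invoke strict convexity along every direction. Each summand $P\mapsto\|P-A_i\|$ is convex, and strictly convex on every line not passing through $A_i$. Because $A_1,A_2,A_3$ are not collinear, no single line can pass through all three, so for any segment in $\mathbb{R}^2$ at least one summand is strictly convex on it; therefore $f$ itself is strictly convex, forcing the minimizer $A_F$ to be unique.

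For parts (b) and (c) the key tool is Fermat's rule $0\in\partial f(A_F)$, applied both at interior points (where $\partial f=\{\nabla f\}$) and at the vertices $A_i$. At any $P\notin\{A_1,A_2,A_3\}$, $f$ is differentiable and $\nabla f(P)=\sum_{i=1}^3 B_i\,\vec u(A_i,P)=-\sum_{i=1}^3 B_i\,\vec u(P,A_i)$, so the equilibrium condition \eqref{floatingequlcond} is exactly $\nabla f(A_F)=\vec 0$. At a vertex $A_i$, the Moreau-Rockafellar sum rule gives
\begin{equation*}
\partial f(A_i)=\sum_{j\ne i}B_j\,\vec u(A_i,A_j)+B_i\,\overline{B}(0,1),
\end{equation*}
so $0\in\partial f(A_i)$ iff $\|\sum_{j\ne i}B_j\vec u(A_i,A_j)\|\le B_i$. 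For (c), this is precisely the hypothesis, hence $A_i$ is a minimizer and, by uniqueness from (a), $A_F=A_i$. For ($b_1$), the strict inequality \eqref{floatingcase} holding at every $A_i$ rules out $0\in\partial f(A_i)$, so $A_F\notin\{A_1,A_2,A_3\}$; statement ($b_2$) then follows by setting the classical gradient to zero as computed above.

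The only technical obstacle I foresee is justifying the subdifferential calculation at the vertices cleanly, since this is where $f$ fails to be differentiable and where both the floating and absorbed dichotomies are decided. Once that one-line subdifferential identity is in hand, the rest of the argument is essentially bookkeeping: coercivity plus strict convexity for (a), and Fermat's rule combined with the norm estimate $\|\sum_{j\ne i}B_j\vec u(A_i,A_j)\|\lessgtr B_i$ for (b) and (c).
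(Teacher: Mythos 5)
Your proposal is correct, but there is nothing in the paper to compare it against: the paper does not prove Theorem~\ref{theor1} at all, it quotes it from \cite{Kup/Mar:97} and \cite{BolMa/So:99}, so your convex-analysis argument is in effect a reconstruction of the standard proof in those references. The structure is sound: coercivity (each $B_i>0$) gives existence; non-collinearity of $A_1,A_2,A_3$ guarantees that on every line at least one summand $P\mapsto\|P-A_i\|$ is strictly convex, hence $f$ is strictly convex and the minimizer is unique; and Fermat's rule $0\in\partial f(A_F)$, evaluated with the Moreau--Rockafellar sum rule, yields exactly the dichotomy (b)/(c), with sufficiency of $0\in\partial f(A_i)$ for global minimality coming from convexity and uniqueness from (a). One small correction: the gradient of $P\mapsto B_j\|P-A_j\|$ at $P=A_i$ is $B_j\,\vec u(A_j,A_i)=-B_j\,\vec u(A_i,A_j)$, so the subdifferential at a vertex is $\partial f(A_i)=-\sum_{j\ne i}B_j\,\vec u(A_i,A_j)+B_i\,\overline{B}(0,1)$ rather than the formula you wrote; since the unit ball is centrally symmetric, the resulting criterion $0\in\partial f(A_i)\iff\bigl\|\sum_{j\ne i}B_j\,\vec u(A_i,A_j)\bigr\|\le B_i$ is unchanged, so your conclusions for (b$_1$), (b$_2$) and (c) stand, but the sign should be fixed. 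It would also be worth stating explicitly that you use the hypothesis that the $A_i$ form a genuine triangle (non-collinear), since that is precisely what rescues strict convexity in (a).
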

%-----------------------------------------------------------------------------------------------------------------------
A direct consequence of the weighted floating case and the
weighted absorbed case of theorem~\ref{theor1} gives
Corollary~\ref{cor1} (Torricelli's theorem) and
Corollary~\ref{cor2} (Cavalieri's alternative) (see in
\cite{BolMa/So:99}, p~236).
\begin{corollary}\label{cor1}
If $B_{1}=B_{2}=B_{3}$ and all three angles of the triangle
$\triangle A_{1}A_{2}A_{3}$  are less than $120^{\circ},$ then
$A_{F}$ is the isogonal point (interior point) of $\triangle
A_{1}A_{2}A_{3}$ whose sight angle to every side of
$A_{1}A_{2}A_{3}$ is $120^{\circ}$ (Torricelli's theorem).
\end{corollary}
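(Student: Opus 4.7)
The plan is to specialise Theorem~\ref{theor1} to the equal-weight setting $B_1=B_2=B_3=:B$, translating both the floating-case inequality (\ref{floatingcase}) and the equilibrium (\ref{floatingequlcond}) into purely angular statements about $\triangle A_1A_2A_3$ and $A_F$. Everything else is then a direct consequence of that theorem.

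First I would check that the hypothesis ``all angles $<120^\circ$'' is exactly the floating-case condition in this setting. Let $\alpha_i$ denote the interior angle of $\triangle A_1A_2A_3$ at $A_i$; the unit vectors $\vec u(A_i,A_j)$ and $\vec u(A_i,A_k)$ subtend precisely the angle $\alpha_i$, so by the parallelogram law
\[
\Bigl\|\textstyle\sum_{j\neq i}B\,\vec u(A_i,A_j)\Bigr\|^{2}=B^{2}(2+2\cos\alpha_i)=\bigl(2B\cos(\alpha_i/2)\bigr)^{2}.
\]
Hence (\ref{floatingcase}) reduces to $2\cos(\alpha_i/2)>1$, i.e.\ $\alpha_i<120^\circ$. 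Under the hypothesis this holds for every $i$, so Theorem~\ref{theor1}(b) applies.

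Next I would read off the geometry from (\ref{floatingequlcond}). Dividing by $B$ gives
\[
\vec u(A_F,A_1)+\vec u(A_F,A_2)+\vec u(A_F,A_3)=\vec 0.
\]
Three unit vectors summing to zero must be pairwise separated by $120^\circ$: writing $\vec u_3=-(\vec u_1+\vec u_2)$ and taking squared norms forces $\vec u_1\cdot\vec u_2=-1/2$, and the same argument on the other pairs gives $\angle A_iA_FA_j=120^\circ$ for every pair. This is precisely the assertion that $A_F$ sees each side of $\triangle A_1A_2A_3$ under an angle of $120^\circ$. Combining this $120^\circ$ configuration (which forces the three rays from $A_F$ to $A_1,A_2,A_3$ to partition the plane into three equal sectors) with part~(b$_1$) of Theorem~\ref{theor1} (which rules out $A_F\in\{A_1,A_2,A_3\}$) yields that $A_F$ lies in the interior of the triangle, for a point strictly outside would place all three vertices in some closed half-plane through $A_F$, incompatible with the $120^\circ$ separation.

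Since Theorem~\ref{theor1} has already supplied existence, uniqueness, and the equilibrium characterisation, no conceptually hard step remains. The only mild obstacle is the two clean algebraic conversions just indicated: the norm identity $\|\vec u(A_i,A_j)+\vec u(A_i,A_k)\|=2\cos(\alpha_i/2)$ that identifies (\ref{floatingcase}) with the $120^\circ$ bound on the triangle's angles, and the symmetric fact that three unit vectors with zero sum are mutually inclined at $120^\circ$, which converts (\ref{floatingequlcond}) into the isogonal sight-angle property.
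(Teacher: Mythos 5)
Your proposal is correct and follows essentially the route the paper itself takes: the paper states Corollary~\ref{cor1} as a direct specialisation of Theorem~\ref{theor1} to equal weights (citing \cite{BolMa/So:99}), and your two conversions --- that $\|\vec u(A_i,A_j)+\vec u(A_i,A_k)\|=2\cos(\alpha_i/2)$ turns (\ref{floatingcase}) into the $120^{\circ}$ angle bound, and that three unit vectors with zero sum are mutually inclined at $120^{\circ}$, which is exactly what Lemma~\ref{lem2} gives for $B_1=B_2=B_3$ --- simply supply the details the paper leaves to the cited reference. No discrepancy worth noting beyond that.
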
\label{cor2}
\begin{corollary}If $B_{1}=B_{2}=B_{3}$ and one of the angles of
the triangle $\triangle A_{1}A_{2}A_{3}$ is equal or greater than
$120^{\circ},$ then $A_{F}$ is the vertex of the obtuse angle of
$\triangle A_{1}A_{2}A_{3}$ (Cavalieri's alternative).
\end{corollary}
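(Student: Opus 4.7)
The plan is to reduce Corollary~\ref{cor2} directly to the weighted absorbed case in Theorem~\ref{theor1}(c), specialized to equal weights. Setting $B_1=B_2=B_3=B>0$, suppose the angle $\alpha_i$ of $\triangle A_1A_2A_3$ at some vertex $A_i$ satisfies $\alpha_i\ge 120^{\circ}$, and denote the remaining vertices by $A_j$ and $A_k$. I would verify the absorbed inequality
\[
\Bigl\|\sum_{\ell\ne i}B_{\ell}\,\vec u(A_i,A_{\ell})\Bigr\|\le B_i
\]
at $A_i$; once this is checked, Theorem~\ref{theor1}(c) immediately identifies $A_F$ with $A_i$.

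The core step is the elementary observation that $\vec u(A_i,A_j)$ and $\vec u(A_i,A_k)$ are unit vectors meeting at the angle $\alpha_i$, so the parallelogram identity yields
\[
\bigl\|\vec u(A_i,A_j)+\vec u(A_i,A_k)\bigr\|^{2}=2+2\cos\alpha_i=4\cos^{2}(\alpha_i/2).
\]
After factoring out the common weight $B$, the absorbed condition becomes $2B\cos(\alpha_i/2)\le B$, i.e.\ $\cos(\alpha_i/2)\le 1/2$, which is equivalent to $\alpha_i/2\ge 60^{\circ}$, that is, to the hypothesis $\alpha_i\ge 120^{\circ}$. Note that at the threshold $\alpha_i=120^{\circ}$ the inequality is attained with equality, which is still covered by part~(c) of Theorem~\ref{theor1}.

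Since the angle sum of a Euclidean triangle equals $180^{\circ}$, at most one vertex can satisfy $\alpha_i\ge 120^{\circ}$, so no ambiguity arises in designating ``the'' obtuse vertex, and this is consistent with the uniqueness asserted in Theorem~\ref{theor1}(a). There is no real obstacle in the argument: the whole proof hinges on the identity $\|\vec u+\vec v\|=2\cos(\theta/2)$ for unit vectors $\vec u,\vec v$ separated by an angle $\theta\in[0,\pi]$, and the statement of Corollary~\ref{cor2} has been calibrated precisely to the threshold $\cos(\theta/2)=1/2$ in the absorbed-case inequality.
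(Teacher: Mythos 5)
Your proposal is correct and follows exactly the route the paper intends: the paper states Corollary~2 as a direct consequence of the weighted absorbed case of Theorem~\ref{theor1} (citing \cite{BolMa/So:99}), and your computation $\|\vec u(A_i,A_j)+\vec u(A_i,A_k)\|=2\cos(\alpha_i/2)\le 1$ precisely verifies condition (c) at the vertex with angle $\ge 120^{\circ}$, with equal weights. You simply supply the short calculation that the paper leaves to the reference, so there is nothing to correct.
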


%--------------------------------------------------------------------

%We state the  classical weighted Fermat-Torricelli problem:
%\begin{problem}
%Consider a triangle $\triangle A_1A_2A_3$ with positive constant
%weights $w_i$ that correspond to the vertex $A_{i},$ for
%$i=1,2,3.$ Find a point $A_{0}$ for which the sum
%\begin{equation} \label{eq:BBB}
%w_1a_1+w_2a_2+w_3a_3
%\end{equation}
%is minimized.
%\end{problem}

Concerning the solution of the weighted Fermat-Torricelli problem
with the use of analytic geometry and trigonometry, we mention the
works of \cite{GreenbergRobertello:65}, \cite{Jones:88},
\cite{Pesamosca:91} \cite{MHAjja:94}, \cite{Gue/Tes:02},
\cite{Zachos/Zou:08} and \cite{Zachos:13}.

Recently, an analytic solution, which express explicitly the
coordinates of the weighted Fermat-Torricelli point with respect
to the coordinates of the three points $A_{i}$ and the three
weights $B_{1},$ $B_{2}$ $B_{3}$ for the weighted
Fermat-Torricelli problem with respect to the weighted floating
case of Theorem~1 has been derived in \cite{Uteshev:12} and for
the case $B_{1}=B_{2}=B_{3}=1$ has been derived in
\cite{Roussos:12}.

In this paper, we present two new analytical solutions for the
weighted Fermat-Torricelli problem in $\mathbb{R}^{2}$ in the
weighted floating case of Theorem~\ref{theor1}. The first
analytical solution gives the coordinates of the weighted
Fermat-Torricelli point as a function of the coordinates of the
three non collinear points and the three given weights (real
positive numbers) in a different way from \cite{Uteshev:12} and
\cite{Roussos:12} (Theorem~\ref{analytic1}. Section~2).

The second analytical solution gives the location of the weighted
Fermat-Torricelli point as a function of two inscribed angles of
the circumscribed circle which passes form the three non collinear
points and the three given weights by applying a coordinate
independent approach given in \cite{Zachos:13}
(Theorem~\ref{angular}, Section~3).

The first geometrical solution of the weighted Fermat-Torricelli
point with ruler and compass focuses on constructing the
intersection of two Simpson lines (weighted case) by applying the
duality of the weighted Fermat-Torricelli problem which was
introduced in \cite{Uteshev:12} (Problem~\ref{firstprob},
Section~4).

Finally, the second geometric solution of the weighted
Fermat-Torricelli point focuses on finding the angle of rotation
of the three non collinear points about one of them and
generalizes Hofmann's rotation proof (\cite{BolMa/So:99},
\cite{Hofmann:27}, \cite{Tikh:86}) regarding the equality of the
given weights (Problem~\ref{secgeomprob}, Corollary~\ref{corhof},
Section~4).

%--------------------------------------------------------------------------------------------------------

\section{Analytical solution of the weighted Fermat-Torricelli problem}

We obtain an analytic solution for the floating case of Theorem~1,
i.e the weighted Fermat-Torricelli point $A_{F}$ is an interior
point of $\triangle A_{1}A_{2}A_{3},$ such that the coordinates
$x_{F}$ and $y_{F}$ of $A_{F}$ are expressed explicitly as a
function of $x_{i},y_{i}$ and $B_{i},$ for $i=1,2,3,$ by using
analytic geometry in $\mathbb{R}^{2}.$

We denote by $a_{ij}$ the length of the linear segment $A_iA_j$
and $\alpha_{ikj}$ the angle $\angle A_{i}A_{k}A_{j}$ for
$i,j,k=0,1,2,2',3,3', i\neq j\neq k$ (See fig.~\ref{fig1}).

Without loss of generality, we set $A_{1}=(0,0),$
$A_{2}=(a_{12},0),$ $A_{3}=(x_{3},y_{3}).$

We need the following two lemmata:

\begin{lemma}{\cite{Gue/Tes:02},\cite{Zachos/Zou:08}}\label{lem1}
Under the condition (\ref{floatingcase}) and the weighted floating
equilibrium condition (\ref{floatingequlcond}) the following
equation is satisfied:

\begin{equation}\label{sinlawweights}
\frac{B_{i}}{\sin\alpha_{203}}=\frac{B_{2}}{\sin\alpha_{103}}=\frac{B_{3}}{\sin\alpha_{102}}=C,
\end{equation}

where
$C=\frac{2B_{1}B_{2}B_{3}}{\sqrt{(B_{1}+B_{2}+B_{3})(B_{2}+B_{3}-B_{1})(B_{1}+B_{3}-B_{2})(B_{1}+B_{2}-B_{3})}}$

\end{lemma}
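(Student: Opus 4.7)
The plan is to view the floating equilibrium condition (\ref{floatingequlcond}) as a statement that three vectors form a closed polygon, and then apply the ordinary law of sines to that polygon. Write $\vec{v}_i := B_i\,\vec{u}(A_F,A_i)$ for $i=1,2,3$. Condition (\ref{floatingequlcond}) says $\vec{v}_1+\vec{v}_2+\vec{v}_3=\vec 0$, so when these three vectors are placed head-to-tail they form a triangle $T$ (the \emph{force triangle}) whose side lengths are exactly $B_1$, $B_2$, $B_3$. I would first check that $T$ is a genuine (non-degenerate) triangle: this follows because the strict inequality (\ref{floatingcase}), applied at each vertex $A_i$, forces the three $B_j$ to satisfy the strict triangle inequalities, equivalently each of the Heron factors $B_1+B_2+B_3$, $-B_1+B_2+B_3$, $B_1-B_2+B_3$, $B_1+B_2-B_3$ is positive.

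Next, the key geometric step is to match the interior angles of $T$ with the angles $\alpha_{i0j}$ at $A_F$. Each side $\vec{v}_i$ of $T$ is parallel to $\vec{u}(A_F,A_i)$, and at a vertex of $T$ two of these sides meet tail-to-head, so the interior angle of $T$ at the vertex opposite to the side of length $B_k$ is the supplement of the angle between $\vec{u}(A_F,A_i)$ and $\vec{u}(A_F,A_j)$, namely $\pi-\alpha_{i0j}$, where $\{i,j,k\}=\{1,2,3\}$. I would verify this with a picture, carefully tracking orientations. Once this correspondence is in place, the ordinary law of sines in $T$, together with $\sin(\pi-\theta)=\sin\theta$, immediately yields
\begin{equation*}
\frac{B_1}{\sin\alpha_{203}}=\frac{B_2}{\sin\alpha_{103}}=\frac{B_3}{\sin\alpha_{102}}=2R,
\end{equation*}
where $R$ is the circumradius of $T$.

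It then remains to identify $2R$ with the constant $C$ in the statement. I would use the standard formula $R=\frac{abc}{4\,\mathrm{Area}(T)}$ together with Heron's formula applied to the triangle of sides $B_1,B_2,B_3$:
\begin{equation*}
\mathrm{Area}(T)=\tfrac14\sqrt{(B_1+B_2+B_3)(-B_1+B_2+B_3)(B_1-B_2+B_3)(B_1+B_2-B_3)},
\end{equation*}
which (the Heron factors being positive by the previous paragraph) gives
\begin{equation*}
2R=\frac{2B_1B_2B_3}{\sqrt{(B_1+B_2+B_3)(B_2+B_3-B_1)(B_1+B_3-B_2)(B_1+B_2-B_3)}}=C,
\end{equation*}
as claimed.

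The main obstacle I anticipate is the bookkeeping in the second step: ensuring that the angle opposite side $B_k$ in the force triangle really equals $\pi-\alpha_{i0j}$ (and not $\alpha_{i0j}$ itself, which would spoil the identity), and that this holds simultaneously at all three vertices. Everything else is either the equilibrium condition unpacked, standard Euclidean trigonometry, or an algebraic simplification via Heron.
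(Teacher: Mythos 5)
Your proof is correct, but it takes a different route from the one the paper relies on. Note first that the paper does not prove Lemma~\ref{lem1} at all: it is quoted from \cite{Gue/Tes:02} and \cite{Zachos/Zou:08}, and the derivation implicit in the paper's own machinery (the proof of Lemma~\ref{lem3} and Lemma~\ref{lem2}) is analytic: one differentiates the objective function to get the first-order conditions $B_2\cos\alpha_{102}+B_3\cos\alpha_{103}=-B_1$ and $-B_2\sin\alpha_{102}+B_3\sin\alpha_{103}=0$, squares and adds to obtain $\cos\alpha_{i0j}=\frac{B_k^2-B_i^2-B_j^2}{2B_iB_j}$, and then recovers the common ratio by computing $\sin\alpha_{i0j}=\frac{\sqrt{(B_1+B_2+B_3)(-B_1+B_2+B_3)(B_1-B_2+B_3)(B_1+B_2-B_3)}}{2B_iB_j}$, so that $\frac{B_k}{\sin\alpha_{i0j}}=C$ by direct algebra. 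You instead read the equilibrium condition (\ref{floatingequlcond}) as closure of the triangle of forces with sides $B_1,B_2,B_3$, match its interior angles with $\pi-\alpha_{i0j}$, and apply the ordinary sine law plus Heron; your non-degeneracy argument (condition (\ref{floatingcase}) together with $\|B_j\vec u(A_i,A_j)+B_k\vec u(A_i,A_k)\|\le B_j+B_k$ forces the strict triangle inequalities on the weights) is sound, and your angle bookkeeping, while only sketched, states the correct correspondence: at the vertex where the head of $B_i\vec u(A_F,A_i)$ meets the tail of $B_j\vec u(A_F,A_j)$ the interior angle is $\pi-\alpha_{i0j}$, and the opposite side has length $B_k$. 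What your approach buys is a geometric interpretation that the algebraic route hides: the constant $C$ is exactly the circumdiameter $2R$ of the force triangle with sides $B_1,B_2,B_3$, and the supplementary-angle relation between that triangle and the angles at $A_F$ is precisely the content of Lemma~\ref{lem2}; conversely, the paper's analytic route has the advantage of producing Lemma~\ref{lem2} and Lemma~\ref{lem1} simultaneously from the variational characterization, which is the form actually used later in the paper.
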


\begin{lemma}{\cite{BolMa/So:99},\cite{MHAjja:94},\cite{Gue/Tes:02},\cite{Zachos/Zou:08}}\label{lem2}
Under the condition (\ref{floatingcase}) and the weighted floating
equilibrium condition (\ref{floatingequlcond}) the angle
$\alpha_{i0j}$ is expressed as a function of $B_{1}, B_{2}$ and
$B_{3}:$

\begin{equation}\label{alphai0j}
\alpha_{i0j}=\arccos\left(\frac{B_{k}^2-B_{i}^2-B_{j}^2}{2B_{i}B_{j}}\right)
\end{equation}
for $i,j,k=1,2,3,$ and $i\ne j\ne k.$

\end{lemma}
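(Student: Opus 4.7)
The plan is to exploit the floating equilibrium condition (\ref{floatingequlcond}) directly, without invoking Lemma~\ref{lem1}. Setting $\vec u_i := \vec u(A_F,A_i)$ for $i=1,2,3$, the equilibrium relation reads $B_1\vec u_1 + B_2\vec u_2 + B_3\vec u_3 = \vec 0$. For fixed $i,j,k$ with $\{i,j,k\}=\{1,2,3\}$, isolate one summand: $B_k\vec u_k = -(B_i\vec u_i + B_j\vec u_j)$.

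Next, I would take the squared Euclidean norm of both sides. Since each $\vec u_m$ is a unit vector, $\|\vec u_i\|^2 = \|\vec u_j\|^2 = 1$, and $\vec u_i\cdot\vec u_j = \cos\alpha_{i0j}$ by definition of the angle $\alpha_{i0j}=\angle A_iA_FA_j$ (here the Fermat point plays the role of the apex, denoted by the index $0$ in the statement). This gives
\begin{equation*}
B_k^2 \;=\; B_i^2 + B_j^2 + 2B_iB_j\cos\alpha_{i0j},
\end{equation*}
which, solved for the cosine, yields exactly the desired formula (\ref{alphai0j}). Because $\alpha_{i0j}\in(0,\pi)$, the arccosine recovers the angle unambiguously.

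The only nontrivial point is to check that the right-hand side lies in $[-1,1]$ so that the arccosine is defined. The upper bound $\cos\alpha_{i0j}\le 1$ is equivalent to $(B_i+B_j)^2\ge B_k^2$, i.e.\ $B_i+B_j\ge B_k$, while the lower bound $\cos\alpha_{i0j}\ge -1$ is equivalent to $(B_i-B_j)^2\le B_k^2$, i.e.\ $|B_i-B_j|\le B_k$. Together these are the strict triangle inequalities on the three weights, which is precisely the content of the floating case hypothesis (\ref{floatingcase}): the hypothesis $\|\sum_{j\ne i} B_j\vec u(A_i,A_j)\|>B_i$ for every $i$ forbids any one weight from dominating the sum of the other two, and thus guarantees the triangle inequalities on $(B_1,B_2,B_3)$.

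I do not anticipate any real obstacle; the proof is a one-line norm computation once the equilibrium condition is rearranged. The only care needed is the bookkeeping of the index permutation $(i,j,k)$ and the verification that the floating hypothesis legitimizes applying $\arccos$ to the resulting expression.
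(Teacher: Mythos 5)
Your proof is correct. It is worth noting, though, that the paper itself offers no proof of Lemma~\ref{lem2}: the statement is quoted from the cited literature, and the nearest in-paper derivation appears inside the proof of Lemma~\ref{lem3}, where the same cosine formulas (\ref{cos203}), (\ref{cos103}), (\ref{cos102}) are obtained by a variational computation --- the objective function is differentiated with respect to $a_{01}$ and $\alpha_{013}$ to produce the two scalar equations (\ref{eqcos}) and (\ref{eqsin}), which are then squared and added. Your argument is the coordinate-free counterpart of exactly that final step: (\ref{eqcos}) and (\ref{eqsin}) are the two components of $B_1\vec u_1+B_2\vec u_2+B_3\vec u_3=\vec 0$, and squaring-and-adding is the same computation as taking the squared norm of $B_i\vec u_i+B_j\vec u_j=-B_k\vec u_k$. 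Since the lemma as stated already assumes the equilibrium condition (\ref{floatingequlcond}), your shortcut is legitimate and more direct, whereas the paper's route in Lemma~\ref{lem3} must rederive the equilibrium equations from first-order conditions because that lemma proves more (the explicit location data $\alpha_{013}$ and $a_{10}$). Two small remarks: the range check on the arccosine argument is actually automatic in your setup, since $\cos\alpha_{i0j}=\vec u_i\cdot\vec u_j$ is a dot product of unit vectors, so the derived identity already places $\bigl(B_k^2-B_i^2-B_j^2\bigr)/(2B_iB_j)$ in $[-1,1]$; and your justification that (\ref{floatingcase}) forces the strict triangle inequalities on the weights is sound, since $\|B_j\vec u(A_i,A_j)+B_k\vec u(A_i,A_k)\|\le B_j+B_k$ turns the hypothesis into $B_j+B_k>B_i$ for each $i$.
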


\begin{theorem}\label{analytic1}
Under the condition (\ref{floatingcase}) of the weighted floating
case, the coordinates of the weighted Fermat-Torricelli point
$A_{F}$ $(x_{F},y_{F})$ are given by the following relations:

\begin{equation}\label{xanalytic}
x_{F}=-\frac{(a_{12}-x_{2}')(x_{3}'y_{3}-x_{3}y_{3}')+d_{3}(x_{3}-x_{3}')}{(a_{12}-x_{2}')(y_{3}'-y_{3})-(y_{3}-x_{3}')y_{2}'},
\end{equation}

\begin{equation}\label{yanalytic}
y_{F}=\frac{y_{2}'(a_{12}y_{3}-x_{3}'y_{3}-a_{12}y_{3}'+x_{3}y_{3}')}{x_{3}y_{2}'-x_{3}y_{2}'+a_{12}y_{3}-x_{2}'y_{3}-a_{12}y_{3}'+x_{2}'y_{3}'}
\end{equation}

where

\begin{equation}\label{x2prime}
x_{2}'=-\frac{B_{3}}{B_{2}}\left(x_{3}\frac{B_{1}^2-B_{2}^2-B_{3}^2}{2B_{2}B_{3}}+y_{3}\sqrt{1-\left(\frac{B_{1}^2-B_{2}^2-B_{3}^2}{2B_{2}B_{3}}\right)^{2}},
\right)
\end{equation}

\begin{equation}\label{y2prime}
y_{2}'=\frac{B_{3}}{B_{2}}\left(x_{3}\sqrt{1-\left(\frac{B_{1}^2-B_{2}^2-B_{3}^2}{2B_{2}B_{3}}\right)^{2}}-y_{3}\frac{B_{1}^2-B_{2}^2-B_{3}^2}{2B_{2}B_{3}}\right),
\end{equation}

\begin{equation}\label{x3prime}
x_{3}'=-a_{12}\frac{B_{2}}{B_{3}}\left(\frac{B_{1}^2-B_{2}^2-B_{3}^2}{2B_{2}B_{3}}\right)
\end{equation}
and
\begin{equation}\label{y3prime}
y_{3}'=-a_{12}\frac{B_{2}}{B_{3}}\left(\sqrt{1-\left(\frac{B_{1}^2-B_{2}^2-B_{3}^2}{2B_{2}B_{3}}\right)^{2}}\right).
\end{equation}

\end{theorem}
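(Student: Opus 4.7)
My plan is to interpret the formulas (\ref{x2prime})--(\ref{y3prime}) geometrically as images of $A_3$ and $A_2$ under two appropriately chosen spiral similarities centered at $A_1$, to prove via a weighted generalization of Hofmann's rotation argument that the weighted Fermat-Torricelli point $A_F$ lies simultaneously on the two Simpson-type lines $A_2A_2'$ and $A_3A_3'$, and finally to obtain (\ref{xanalytic})--(\ref{yanalytic}) by intersecting those two lines.

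First, substituting $\cos\alpha_{203}=(B_1^2-B_2^2-B_3^2)/(2B_2B_3)$ from Lemma~\ref{lem2} together with the corresponding value of $\sin\alpha_{203}$, I would verify by direct calculation that (\ref{x2prime})--(\ref{y2prime}) describe the image $\sigma_2(A_3)$ of $A_3$ under the spiral similarity $\sigma_2$ centered at $A_1$ of rotation angle $\pi-\alpha_{203}$ and ratio $B_3/B_2$, and that (\ref{x3prime})--(\ref{y3prime}) describe the image $\sigma_3(A_2)$ of $A_2$ under the oppositely oriented spiral similarity $\sigma_3$ of angle $-(\pi-\alpha_{203})$ and ratio $B_2/B_3$. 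This identification is a routine substitution.

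The heart of the argument is showing $A_F\in A_2A_2'$. Set $A_F':=\sigma_2(A_F)$, so that $|A_1A_F'|=(B_3/B_2)|A_1A_F|$ and the apex angle at $A_1$ in triangle $A_1A_FA_F'$ equals $\pi-\alpha_{203}$. The law of cosines combined with the algebraic identity
\begin{equation*}
1+\frac{B_3^2}{B_2^2}+2\frac{B_3}{B_2}\cos\alpha_{203}=\frac{B_1^2}{B_2^2},
\end{equation*}
which follows immediately from Lemma~\ref{lem2}, yields $B_2|A_FA_F'|=B_1|A_1A_F|$. Because $\sigma_2$ is a similarity, one also has $B_2|A_F'A_2'|=B_3|A_3A_F|$, so the objective rewrites as
\begin{equation*}
f(A_F)=B_2\bigl(|A_2A_F|+|A_FA_F'|+|A_F'A_2'|\bigr),
\end{equation*}
namely $B_2$ times the length of a polygonal path from the fixed point $A_2$ to the fixed point $A_2'$. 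Since $A_F$ minimizes $f$, this path must degenerate into a straight segment, whence $A_F\in A_2A_2'$. The parallel argument using $\sigma_3$ and $A_F'':=\sigma_3(A_F)$ places $A_F$ on $A_3A_3'$.

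To conclude, writing out the two-point equations of the lines $A_2A_2'$ and $A_3A_3'$ and solving the resulting $2\times 2$ linear system produces (\ref{xanalytic}) and (\ref{yanalytic}). The main obstacle is the Hofmann-type identity $|A_FA_F'|=(B_1/B_2)|A_1A_F|$ in the previous step: only because the rotation angle is chosen to be exactly $\pi-\alpha_{203}$ with ratio $B_3/B_2$ does the law-of-cosines expression collapse cleanly to $B_1^2/B_2^2$. Once this identity is in place, the broken-path minimization delivers both collinearity claims, and the intersection computation is a routine analytic-geometric exercise.
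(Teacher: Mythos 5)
Your route is genuinely different from the paper's. The paper constructs the triangles $\triangle A_{1}A_{2}A_{3'}$ and $\triangle A_{1}A_{3}A_{2'}$ with the angles prescribed by Lemma~\ref{lem2}, observes that the second intersection point of the circles through $A_{1},A_{3'},A_{2}$ and $A_{1},A_{2'},A_{3}$ subtends exactly the angles $\alpha_{i0j}$ and hence is $A_{F}$, and then reads off that $A_{F}$ lies on the weighted Simpson lines $A_{2}A_{2'}$ and $A_{3}A_{3'}$; the coordinates follow by intersecting the two lines, exactly as in your last step. You instead obtain the collinearity variationally, via a weighted Hofmann rotation (spiral similarity $\sigma_2$ of ratio $B_3/B_2$ and angle $\pi-\alpha_{203}$), and your key identity is correct: the law of cosines together with $1+B_3^2/B_2^2+2(B_3/B_2)\cos\alpha_{203}=B_1^2/B_2^2$ does give $B_2|A_FA_F'|=B_1|A_1A_F|$, so $f(P)=B_2\bigl(|A_2P|+|P\,\sigma_2(P)|+|\sigma_2(P)A_{2'}|\bigr)$ for every $P$. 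This is an attractive self-contained alternative to the paper's reliance on the circle/Simpson-line facts, and it also ties Section~2 to the rotation argument of Problem~\ref{secgeomprob}.

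However, there is a genuine gap at the decisive step ``since $A_F$ minimizes $f$, this path must degenerate into a straight segment.'' Minimality of $f$ at $A_F$ plus the bound $f(P)\ge B_2|A_2A_{2'}|$ does not force the path through $A_F$ to be straight unless you also show that the lower bound is \emph{attained}, i.e.\ that there exists a point $P^{*}$ on the segment $A_2A_{2'}$ whose image $\sigma_2(P^{*})$ also lies on that segment, beyond $P^{*}$ toward $A_{2'}$. That attainment is precisely where the floating-case hypothesis (\ref{floatingcase}) must enter, and your argument never uses it there: in an absorbed case with weights still satisfying the triangle inequality, the same path identity holds verbatim, yet $A_F$ is a vertex and the path through it is bent, so the inference ``minimizer $\Rightarrow$ straight path'' is false without the extra step. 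You can close the gap either by proving attainment (intersect the arc on which $A_1A_2$ subtends the angle $\alpha_{102}$ of Lemma~\ref{lem2} with the segment $A_2A_{2'}$ and check the ordering, using (\ref{floatingcase})), or by arguing as the paper does: use the equilibrium angles of Lemmas~\ref{lem1} and \ref{lem2} at $A_F$ to verify directly that $\angle A_2A_FA_1+\angle A_1A_F\sigma_2(A_F)=\pi$ and similarly at $\sigma_2(A_F)$, which yields the collinearity of $A_2, A_F, \sigma_2(A_F), A_{2'}$ without any appeal to attainment. With that repaired, the remaining identification of $(x_2',y_2'),(x_3',y_3')$ and the $2\times2$ linear system are routine and agree with the paper.
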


\begin{proof}[Proof of Theorem~\ref{analytic1}:]

\begin{figure}
\centering
\includegraphics[scale=0.8]{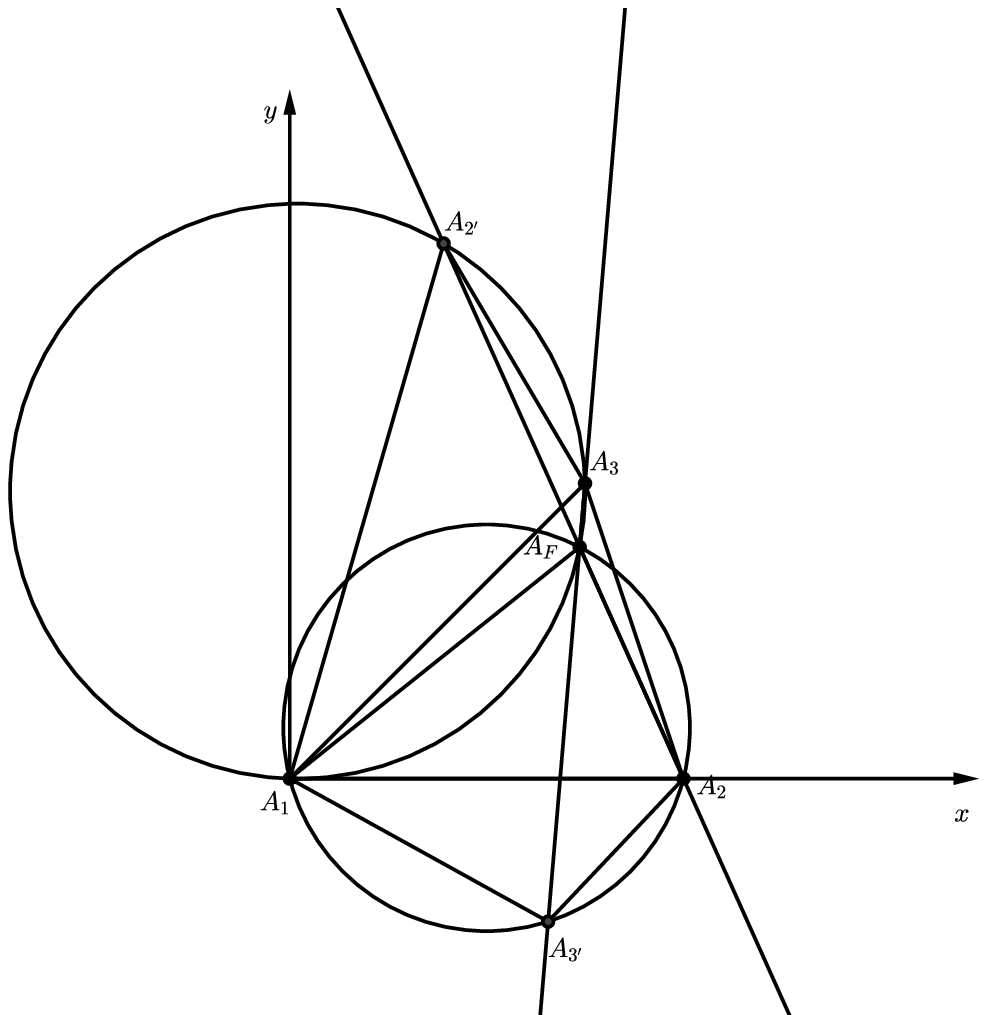}
\caption{}\label{fig1}
\end{figure}

We apply the weighted Torricelli configuration which is similar to
the configuration used in \cite{ENGELBRECHt:1877} and we construct
two similar triangles $\triangle A_{1}A_{2}A_{3'}$ and $\triangle
A_{1}A_{3}A_{2'},$ such that:
\begin{equation}\label{alpha13p2132p}
\alpha_{13^{\prime}2}=\alpha_{132^{\prime}}=\pi-\alpha_{102},
\end{equation}
\begin{equation}\label{alpha13p2132p}
\alpha_{213^{\prime}}=\alpha_{312^{\prime}}=\pi-\alpha_{203},
\end{equation}
and
\begin{equation}\label{alpha13p2132p}
\alpha_{123^{\prime}}=\alpha_{12^{\prime}3}=\pi-\alpha_{103}.
\end{equation}

From (\ref{alpha13p2132p}), (\ref{alpha13p2132p}) and
(\ref{alpha13p2132p}), we derive that the point of intersection of
the two circles which pass from $A_{1}, A_{3^{\prime}}, A_{2}$ and
$A_{1}, A_{2^{\prime}}, A_{3},$ respectively, is the weighted
Fermat-Torricelli point $A_{F}$ (fig.~\ref{fig1}). Therefore, we
obtain that $A_{F}$ is the intersection point of the lines
(weighted Simpson lines) defined by $A_{2}A_{2^{\prime}}$ and
$A_{3}A_{3^{\prime}}.$

Thus, we have:

\begin{equation}\label{eqanal1}
x_{2}'=a_{12'}\cos(\alpha_{213}+\pi-\alpha_{203}),
\end{equation}

\begin{equation}\label{eqanal2}
y_{2}'=a_{12'}\sin(\alpha_{213}+\pi-\alpha_{203}),
\end{equation}

\begin{equation}\label{eqanal3}
x_{3}'=a_{13'}\cos(\pi-\alpha_{203}),
\end{equation}

and

\begin{equation}\label{eqanal4}
y_{3}'=-a_{13'}\sin(\pi-\alpha_{203}).
\end{equation}

%---------------------------------------------------------------------------------

By applying the sine law in $\triangle A_{1}A_{2}A_{3'}$ and
$\triangle A_{1}A_{3}A_{2'},$ we get, respectively,:

\begin{equation}\label{eqanal5}
a_{13'}=a_{12}\frac{B_{2}}{B_{3}}.
\end{equation}

and

\begin{equation}\label{eqanal6}
a_{12'}=a_{13}\frac{B_{3}}{B_{2}}.
\end{equation}

By replacing (\ref{eqanal5}),(\ref{eqanal6}) and (\ref{alphai0j})
from lemma~\ref{lem2} in (\ref{eqanal1}), (\ref{eqanal2}),
(\ref{eqanal3}) and (\ref{eqanal4}), we obtain (\ref{x2prime}),
(\ref{y2prime}), (\ref{x3prime}) and (\ref{y3prime}).

The equations of the lines defined by $A_{2}A_{2'}$ and
$A_{3}A_{3'},$ respectively, are as follows:

\begin{equation}\label{eqanal7}
\frac{y_{2}'}{x_{2}'-a_{12}}=\frac{y}{x-a_{12}}
\end{equation}
and
\begin{equation}\label{eqanal8}
\frac{y_{3}-y_{3}'}{x_{3}-x_{3}'}=\frac{y_{3}-y}{x_{3}-x}
\end{equation}

Solving (\ref{eqanal7}) and (\ref{eqanal8}) with respect to
$(x,y)$ we derive the point of intersection $A_{F}=(x_{F},y_{F}),$
and the coordinates $x_{F}$ and $y_{F}$ are given by
(\ref{xanalytic}) and (\ref{yanalytic}), respectively.

\end{proof}

\section{An explicit angular solution of the weighted Fermat-Torricelli problem}

It is well known that the barycenter $A_{m}$ of $\triangle
A_{1}A_{2}A_{3}$ is constructed by the relation
$a_{im}=\frac{2}{3}a_{i,jk}=\frac{1}{3}\sqrt{2a_{ij}^2+2a_{ik}^2-a_{jk}^2},$
where $a_{i,jk}$ is the length of the midline that connects the
vertex $A_{i}$ with the midpoint of the line segment $A_{j}A_{k}$
for $i,j,k=1,2,3,$ $i\ne j\ne k$ and the median minimizes the
objective function $a_{m1}^{2}+a_{m2}^{2}+a_{m3}^{2}.$

A natural question to ask is if the location of the weighted
Fermat-Torricelli problem could be given with respect to the
lengths of the sides of $\triangle A_{1}A_{2}A_{3}$ and the
constant positive weights $B_{1}, B_{2},B_{3}.$

A positive answer to this question is given by the following lemma
(\cite[Corollary~2]{Zachos:13}):

\begin{lemma}{(\cite[Corollary~2]{Zachos:13})}\label{lem3}The explicit solution of the weighted Fermat-Torricelli problem in $\mathbb{R}^{2}$, under the condition
(\ref{floatingcase}) (weighted floating case)  is given by:
\begin{equation}\label{mainres}
\alpha_{013}=\operatorname{arccot}\left(\frac{\sin(\alpha_{213})-\cos(\alpha_{213})
\cot(\arccos{\frac{B_{3}^{2}-B_{1}^2-B_{2}^2}{2B_{1}B_{2}}})-
\frac{a_{13}}{a_{12}
}\cot(\arccos{\frac{B_{2}^{2}-B_{1}^2-B_{3}^2}{2B_{1}B_{3}}})}
{-\cos(\alpha_{213})-\sin(\alpha_{213})
\cot(\arccos{\frac{B_{3}^{2}-B_{1}^2-B_{2}^2}{2B_{1}B_{2}}})+
\frac{a_{13}}{a_{12} }}\right)
\end{equation}
and
\begin{equation}\label{mainresb}
a_{10}=\frac{\sin\left(\alpha_{013}+\arccos{\frac{B_{2}^{2}-B_{1}^2-B_{3}^2}{2B_{1}B_{3}}}\right)a_{13}}{\sin\left(\arccos{\frac{B_{2}^{2}-B_{1}^2-B_{3}^2}{2B_{1}B_{3}}}\right)}.
\end{equation}
where

\begin{equation}\label{mainresbr2bb}
\alpha_{213}=\arccos\left(\frac{a_{12}^2+a_{13}^2-a_{23}^2}{2a_{12}a_{13}}\right)
\end{equation}

and $\alpha_{013}$ and $a_{10}$ depend on $B_{1}, B_{2}, B_{3},$
$a_{13},$ $a_{12}$ and $a_{23}.$

\end{lemma}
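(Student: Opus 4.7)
The plan is to exploit Lemma~\ref{lem2}, which pins down in closed form the three angles at the weighted Fermat-Torricelli point $A_F\equiv A_0$, and then reduce the problem to elementary trigonometry in the two sub-triangles created by drawing the cevian $A_1A_0$. With the vertex angle at $A_1$ denoted $\alpha_{213}$ (itself computed from the Law of Cosines, which is exactly (\ref{mainresbr2bb})), the floating condition (\ref{floatingcase}) guarantees that $A_0$ lies strictly inside $\triangle A_1A_2A_3$. Hence the ray $A_1A_0$ splits $\alpha_{213}$ into $\alpha_{013}$ and $\alpha_{012}=\alpha_{213}-\alpha_{013}$, and determining the Fermat-Torricelli point reduces to determining the single unknown $\alpha_{013}$ (after which $a_{10}$ comes for free).

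First I would analyse $\triangle A_1A_0A_3$. Its angle at $A_0$ is the known quantity $\alpha_{103}=\arccos\frac{B_2^{2}-B_1^{2}-B_3^{2}}{2B_1B_3}$ from Lemma~\ref{lem2}, so the third angle (at $A_3$) is forced to be $\pi-\alpha_{013}-\alpha_{103}$. The Law of Sines then gives
$$a_{10}=\frac{\sin(\pi-\alpha_{013}-\alpha_{103})}{\sin\alpha_{103}}\,a_{13}=\frac{\sin(\alpha_{013}+\alpha_{103})}{\sin\alpha_{103}}\,a_{13},$$
which is precisely (\ref{mainresb}). Running the same argument in $\triangle A_1A_0A_2$, whose angle at $A_0$ is $\alpha_{102}=\arccos\frac{B_3^{2}-B_1^{2}-B_2^{2}}{2B_1B_2}$ and whose angle at $A_1$ is $\alpha_{213}-\alpha_{013}$, furnishes a companion expression
$$a_{10}=\frac{\sin(\alpha_{213}-\alpha_{013}+\alpha_{102})}{\sin\alpha_{102}}\,a_{12}.$$

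Equating the two expressions for $a_{10}$ eliminates the length and leaves one scalar trigonometric equation in the unknown $\alpha_{013}$, with data $\alpha_{213}$, the ratio $a_{13}/a_{12}$, and the weight-dependent constants $\cot\alpha_{102}$ and $\cot\alpha_{103}$. To finish, I would expand each sine by the addition formula, divide throughout by $\cos\alpha_{013}$, and collect the result into a linear equation in $\tan\alpha_{013}$; solving and inverting gives $\cot\alpha_{013}$ as a quotient of two affine expressions in $\cot\alpha_{102}$ and $\cot\alpha_{103}$, which after resubstitution of the arccosines matches (\ref{mainres}) verbatim. The only non-routine issue is verifying that the denominator of that quotient does not vanish, so that $\operatorname{arccot}$ selects a unique angle in $(0,\alpha_{213})$ corresponding to the interior point $A_F$; this should follow directly from the strict inequalities (\ref{floatingcase}), which force each angle at $A_F$ strictly into $(0,\pi)$ and thus keep the relevant cotangents finite.
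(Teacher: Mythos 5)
Your proposal is correct and follows essentially the same route as the paper: the paper also applies the sine law in the two sub-triangles $\triangle A_{1}A_{0}A_{2}$ and $\triangle A_{1}A_{0}A_{3}$, eliminates $a_{01}$ to solve for $\operatorname{arccot}\alpha_{013}$, and reads off (\ref{mainresb}) from the sine law in $\triangle A_{1}A_{0}A_{3}$. The only difference is cosmetic: instead of citing Lemma~\ref{lem2} for the angles $\alpha_{102},\alpha_{103}$ at $A_{F}$, the paper re-derives them inside the proof from the first-order conditions of the objective function (using the derivative formulas of Appendix~A), which you are entitled to skip since Lemma~\ref{lem2} is already available.
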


\begin{proof}[Proof of Lemma~\ref{lem3}:]
We assume that the weighted floating case occurs (see
theorem~\ref{theor1}, Case~b), in order to locate it in the
interior of the $\triangle A_1A_2A_3.$

From the cosine law in $\triangle A_{1}A_{0}A_{2},$ and $\triangle
A_{1}A_{0}A_{3}$ we get, respectively:
\begin{equation}\label{i23cos}
a_{02}^{2}=a_{01}^{2}+a_{12}^{2}-2a_{01}a_{12}\cos(\alpha_{213}-\alpha_{013})
\end{equation}
and
\begin{equation}\label{i23cos3}
a_{03}^{2}=a_{01}^{2}+a_{13}^{2}-2a_{01}a_{13}\cos(\alpha_{013}).
\end{equation}
From (\ref{i23cos}) and (\ref{i23cos3}), $a_{02}$ and $a_{03}$ are
expressed with respect to the two variables $a_{01}$ and
$\alpha_{013}:$
\[a_{0i}=a_{0i}(a_1,\alpha_{013}),\]

for $i=2,3.$ By differentiating (\ref{obj1}) with respect to
$a_{01}$ and $\alpha_{013},$ respectively, we get:
\begin{equation}\label{der1}
B_1+B_2\frac{\partial a_{02}}{\partial a_{01}}+B_3\frac{\partial
a_{03}}{\partial a_{01}}=0,
\end{equation}
\begin{equation}\label{der2}
B_2\frac{\partial a_{02}}{\partial \alpha_{013}}+B_3\frac{\partial
a_{03}}{\partial \alpha_{013}}=0.
\end{equation}

From Appendix~A, by replacing (\ref{dera2a1}) and (\ref{dera3a1})
in (\ref{der1}), we obtain:

\begin{equation} \label{eqcos}
B_2\cos(\alpha_{102})+B_3\cos(\alpha_{103})=-B_{1}
\end{equation}

By replacing (\ref{dera2a013}) and (\ref{dera3a013}) in
(\ref{der2}), we obtain:
\begin{equation} \label{eqsin}
-B_2\sin(\alpha_{102})+B_3\sin(\alpha_{103})=0
\end{equation}

By squaring both parts of (\ref{eqcos}) and (\ref{eqsin}) and by
adding the two derived equations, we get:

\begin{equation}\label{cos203}
\cos(\alpha_{203})=\frac{B_{1}^2-B_{2}^2-B_{3}^2}{2B_{2}B_{3}}.
\end{equation}

Similarly by expressing the objective function with respect to the
two variables $a_{2},$ $\alpha_{023},$ and with respect to the two
variables $a_{3},$ $\alpha_{031},$ we derive, respectively:

\begin{equation}\label{cos103}
\cos(\alpha_{103})=\frac{B_{2}^2-B_{1}^2-B_{3}^2}{2B_{1}B_{3}}
\end{equation}
and
\begin{equation}\label{cos102}
\cos(\alpha_{102})=\frac{B_{3}^2-B_{1}^2-B_{2}^2}{2B_{1}B_{2}}.
\end{equation}

From the sine law in $\triangle A_1A_0A_2$, $\triangle A_1A_0A_3,$
we get, respectively:

\begin{equation}\label{sinl12}
\frac{a_{12}}{\sin(\alpha_{102})}=\frac{a_{01}}{\sin(\alpha_{213}-\alpha_{013}+\alpha_{102})}
\end{equation}
and
\begin{equation}\label{sinl13}
\frac{a_{13}}{\sin(\alpha_{103})}=\frac{a_{01}}{\sin(\alpha_{013}+\alpha_{103})}
\end{equation}

By eliminating $a_{01}$ from (\ref{sinl12}) and (\ref{sinl13}), we
obtain:
\begin{equation} \label{result}
\alpha_{013}=arccot(\frac{\sin(\alpha_{213})-\cos(\alpha_{213})
\cot(\alpha_{102})- \frac{a_{31}}{a_{12} }\cot(\alpha_{103})}
{-\cos(\alpha_{213})-\sin(\alpha_{213}) \cot(\alpha_{102})+
\frac{a_{31}}{a_{12} }})
\end{equation}
By replacing (\ref{cos102}) and (\ref{cos103}) in (\ref{result}),
we obtain (\ref{mainres}). From the sine law in $\triangle
A_{1}A_{0}A_{3},$ we derive (\ref{mainresb}).

The values of $a_{01}$ and $\alpha_{013}$ give the location of the
weighted Fermat-Torricelli point $A_{F}.$
\end{proof}

\begin{remark}
The explicit solution of the weighted Fermat-Torricelli problem is
similar with the definition of a complex number in a polar form:\\
$z=r\exp(i(\alpha_{213}-\alpha_{013})),$ where the absolute value
of z is $r=a_{1}$ and the argument of z is $\arg
z=\alpha_{213}-\alpha_{013}.$
\end{remark}

Let $C(Q,R)$ be the inscribed circle with center $Q$ and radius
$R$ which passes from the vertex $A_{i},$ for $i=1,2,3.$

Each of the three central angles is given by the relation:
\begin{equation}\label{centralij}
c_{iQj}=2\alpha_{imj},
\end{equation}
such that:
\[c_{1Q2}+c_{2Q3}+c_{1Q3}=2\pi\]
or
\begin{equation}\label{central1Q2}
c_{1Q2}=2\pi-c_{1Q3}-c_{2Q3},
\end{equation}
 for $i\ne m \ne j,$ $i,m,j=1,2,3.$ From the sine law in
$\triangle A_{1}A_{2}A_{3}$ and taking into account
(\ref{centralij}), we get:
\begin{equation}\label{centralijsine}
\frac{a_{13}}{\sin(\frac{c_{1Q3}}{2})}=\frac{a_{12}}{\sin(\frac{c_{1Q2}}{2})}=2R.
\end{equation}
By replacing (\ref{centralij}), (\ref{central1Q2})
(\ref{centralijsine}) in (\ref{mainres}) and (\ref{mainresb}) of
lemma~\ref{lem3}, we derive the following result:
\begin{theorem}\label{angular}
An explicit angular solution of the weighted Fermat-Torricelli
problem in $\mathbb{R}^{2},$ under the condition
(\ref{floatingcase}) is given by:
\begin{equation}\label{mainresang}
\cot\alpha_{013}=\frac{\sin(\frac{c_{2Q3}}{2})-\cos(\frac{c_{2Q3}}{2})
\cot(\arccos{\frac{B_{3}^{2}-B_{1}^2-B_{2}^2}{2B_{1}B_{2}}})-
\frac{\sin(\frac{c_{1Q3}}{2})}{\sin(\frac{c_{1Q3}+c_{2Q3}}{2})
}\cot(\arccos{\frac{B_{2}^{2}-B_{1}^2-B_{3}^2}{2B_{1}B_{3}}})}
{-\cos(\frac{c_{2Q3}}{2})-\sin(\frac{c_{2Q3}}{2})
\cot(\arccos{\frac{B_{3}^{2}-B_{1}^2-B_{2}^2}{2B_{1}B_{2}}})+
\frac{\sin(\frac{c_{1Q3}}{2})}{\sin(\frac{c_{1Q3}+c_{2Q3}}{2}) }}
\end{equation}
and
\begin{equation}\label{mainresbang}
a_{10}=2R\frac{\sin(\alpha_{013}+\arccos{\frac{B_{2}^{2}-B_{1}^2-B_{3}^2}{2B_{1}B_{3}}})
\sin(\frac{c_{1Q3}}{2})}{\sin(\arccos{\frac{B_{2}^{2}-B_{1}^2-B_{3}^2}{2B_{1}B_{3}}})},
\end{equation}
where  $\alpha_{013}$ and $a_{10}$ depend on $B_{1}, B_{2},
B_{3},$ $c_{1Q3},$ $c_{2Q3},$ and $R.$

\end{theorem}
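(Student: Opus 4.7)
The plan is to obtain Theorem~\ref{angular} as a direct translation of Lemma~\ref{lem3} from the side/angle data of $\triangle A_{1}A_{2}A_{3}$ into the inscribed-circle data $(c_{1Q3},c_{2Q3},R)$. Since Lemma~\ref{lem3} already provides explicit formulas for $\alpha_{013}$ and $a_{10}$ in terms of $a_{12}$, $a_{13}$, $\alpha_{213}$ and the weights, nothing analytic remains to be done; the task reduces to replacing the two side lengths and the inscribed angle by expressions in $c_{1Q3}$, $c_{2Q3}$ and $R$ using the inscribed-angle theorem together with the extended law of sines.

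First I would invoke the inscribed-angle theorem in the form (\ref{centralij}) to rewrite the vertex angle $\alpha_{213}=\alpha_{1m3}$ (the inscribed angle subtending the arc $A_{1}A_{3}$ that does not contain $A_{2}$) as $c_{2Q3}/2$. Wait, more precisely one has $\alpha_{213}$ corresponding to the central angle subtending the chord $A_{1}A_{3}$, so $\alpha_{213}=c_{1Q3}/2$ if one reads $c_{1Q3}$ as the central angle at $Q$ of chord $A_{1}A_{3}$ on the appropriate arc; by the convention (\ref{centralij}) set out in the paper the inscribed angle at $A_{2}$ over $A_{1}A_{3}$ equals $c_{2Q3}/2$ (the subscript pattern in the paper names the central angle by its opposing inscribed vertex), so I substitute $\alpha_{213}\mapsto c_{2Q3}/2$ into every $\cos(\alpha_{213})$ and $\sin(\alpha_{213})$ occurring in (\ref{mainres}).

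Next I would handle the ratio $a_{13}/a_{12}$ using (\ref{centralijsine}): since $a_{13}=2R\sin(c_{1Q3}/2)$ and $a_{12}=2R\sin(c_{1Q2}/2)$, the factor of $2R$ cancels, giving
\begin{equation*}
\frac{a_{13}}{a_{12}}=\frac{\sin(c_{1Q3}/2)}{\sin(c_{1Q2}/2)}.
\end{equation*}
Now I use (\ref{central1Q2}) to eliminate $c_{1Q2}$: because $c_{1Q2}=2\pi-c_{1Q3}-c_{2Q3}$, we have $c_{1Q2}/2=\pi-(c_{1Q3}+c_{2Q3})/2$, so $\sin(c_{1Q2}/2)=\sin((c_{1Q3}+c_{2Q3})/2)$. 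Inserting the resulting ratio $\sin(c_{1Q3}/2)/\sin((c_{1Q3}+c_{2Q3})/2)$ in place of $a_{13}/a_{12}$ in (\ref{mainres}) yields (\ref{mainresang}).

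Finally, for (\ref{mainresbang}) I would substitute $a_{13}=2R\sin(c_{1Q3}/2)$ directly into (\ref{mainresb}); the weight-dependent arccos factors are untouched, so the formula converts verbatim. There is no genuine obstacle: the only point that requires care is the consistent reading of the subscripts in (\ref{centralij}), i.e.\ ensuring that the inscribed angle at $A_{m}$ opposite the chord $A_{i}A_{j}$ is matched with the correct central angle $c_{iQj}$ so that the substitutions $\alpha_{213}\mapsto c_{2Q3}/2$ and $a_{13}/a_{12}\mapsto \sin(c_{1Q3}/2)/\sin((c_{1Q3}+c_{2Q3})/2)$ carry the right indices. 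Once these bookkeeping identifications are fixed, (\ref{mainresang}) and (\ref{mainresbang}) follow by pure substitution into (\ref{mainres}) and (\ref{mainresb}).
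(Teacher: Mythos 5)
Your proposal is correct and follows the paper's own derivation exactly: Theorem~\ref{angular} is obtained by substituting $\alpha_{213}=c_{2Q3}/2$ (from $c_{iQj}=2\alpha_{imj}$), $\sin(c_{1Q2}/2)=\sin\bigl((c_{1Q3}+c_{2Q3})/2\bigr)$ via $c_{1Q2}=2\pi-c_{1Q3}-c_{2Q3}$, and $a_{13}=2R\sin(c_{1Q3}/2)$, $a_{12}=2R\sin(c_{1Q2}/2)$ from (\ref{centralijsine}) into (\ref{mainres}) and (\ref{mainresb}). The only blemish is your parenthetical reading of the convention: in (\ref{centralij}) the central angle $c_{iQj}$ is named by the chord endpoints $A_i,A_j$ (so the inscribed angle at $A_2$ over $A_1A_3$ is $c_{1Q3}/2$, not $c_{2Q3}/2$), but the substitution you actually carry out, $\alpha_{213}\mapsto c_{2Q3}/2$ for the angle at $A_1$ subtending $A_2A_3$, is the correct one.
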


\begin{remark} We conclude that by setting $R=1$ (unit radius circumscribed circle) in
(\ref{mainresbang}), the explicit solution depends only on five
given elements: $B_{1}, B_{2}, B_{3},$ $c_{1Q3}$ and $c_{2Q3}.$
This unique result holds only if the inequalities
(\ref{floatingcase}) of the weighted floating case of Theorem~1,
Case (b) are satisfied.
\end{remark}

\begin{remark}
We note that lemma~\ref{lem3} and theorem~\ref{angular} provide an
analytic solution for the weighted Fermat-Torricelli problem in
$\mathbb{R}^{2}$ without using the coordinates of the points
$A_{i},$ for $i=1,2,3$ (Coordinate independent approach) taking
into account only given Euclidean elements (lengths and angles).
\end{remark}

\section{Two new geometrical solutions of the weighted Fermat-Torricelli point in the Euclidean plane}

We present two new geometrical solutions to find the weighted
Fermat-Torricelli point in the weighted floating case.

The first solution deals with the position of $A_{1'}$ and
$A_{3'}$ which shall give the position of the two Simpson lines
defined by $A_{2}$ and $A_{2'}$ and $A_{3},$ $A_{3'}$
(fig.~\ref{fig2})

The second solution deals with the generalization of Hofmann's
rotation proof (\cite{Tikh:86},\cite{Hofmann:27}) for
$B_{1}=B_{2},$ such that $B_{1}$

\begin{problem}\label{firstprob}Construct the solution of Problem~1 (Weighted Fermat-Torricelli problem) using ruler and compass, under the condition (\ref{floatingcase}).
\end{problem}

\begin{figure}
\centering
\includegraphics[scale=0.8]{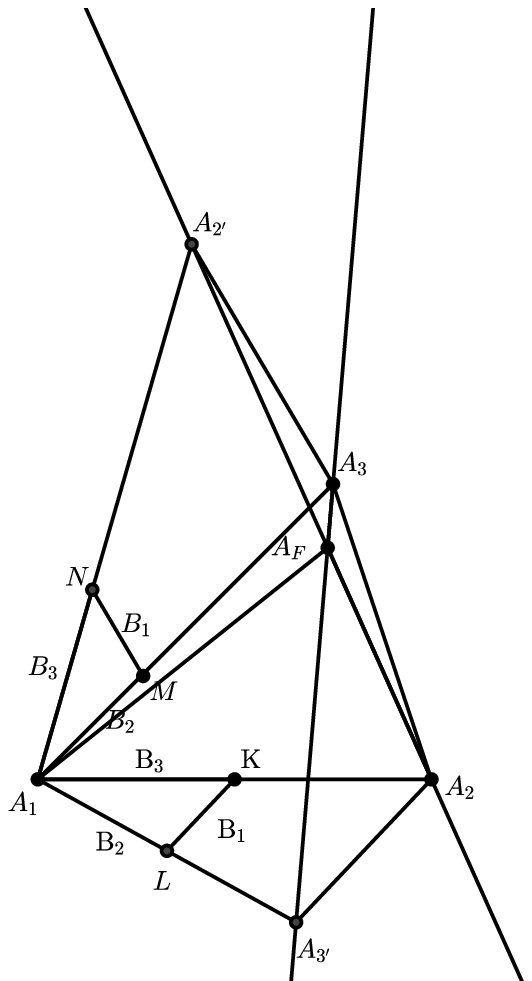}
\caption{}\label{fig2}
\end{figure}

\begin{proof}[Solution of Problem~\ref{firstprob}:]

We need to construct the vertices $A_{3'}$ and $A_{2'}$ (see
fig.~\ref{fig2}).

First, we construct the vertex $A_{3'}.$ We select a point $K$
which belongs to the linear segment $A_{1}A_{2},$ such that
$\|A_{1}K\|=B_{3}$ and we construct a triangle $\triangle A_{1}KL$
with the other two sides $\|A_{1}L\|=B_{2}$ and $\|KL\|=B_{3}.$

Thus, lemma~\ref{lem1} yields $\angle
A_{2}A_{1}A_{3'}=\alpha_{213'}=\pi-\alpha_{203}.$

We need to calculate $a_{13'},$ in order to find the location of
$A_{3}'.$

Take a point $G$ to the line defined by $A_{1}A_{2},$ such that
$\|A_{1}A_{2}\|=a_{12},$ $\|A_{2}G\|=B_{3}$ where
$\|A_{1}G\|=a_{12}+B_{3}$ and construct with a ruler and compass
the perpendicular linear segment at the point $G$ to the line
defined by  $A_{1}A_{2}$ and take a point $H$ such that
$\\|GH\|=B_{2}.$ We denote by $I$ the point of intersection of the
line defined by $A_{2}H$ and the perpendicular line at the point
$A_{1}$ with respect to the line defined by $A_{1}A_{2}$
(fig.~\ref{fig3}). Taking into account the similar triangles
$\triangle A_{1}A_{2}I$ and  $\triangle A_{2}GH,$ we get:

\begin{equation}\label{eqnalc1}
a_{13'}=a_{12}\frac{B_{2}}{B_{3}}.
\end{equation}

\begin{figure}
\centering
\includegraphics[scale=0.8]{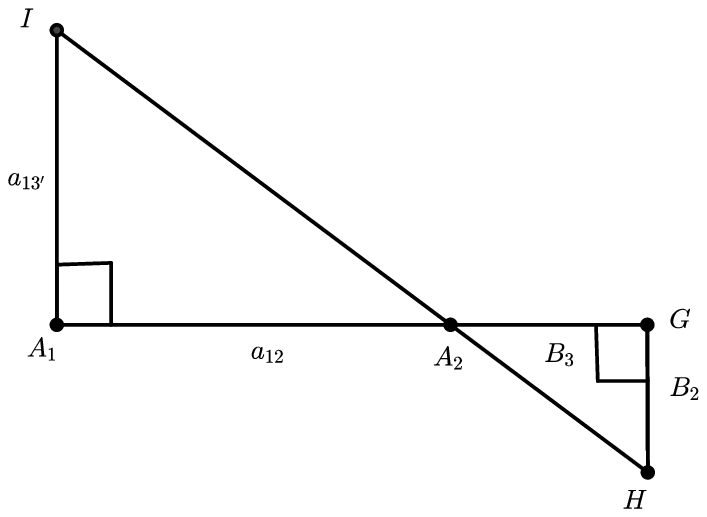}
\caption{}\label{fig3}
\end{figure}

Similarly, we construct the vertex $A_{2'}.$ We select a point $M$
which belongs to the linear segment $A_{1}A_{3},$ such that
$\|A_{1}M\|=B_{2}$ and we construct a triangle $\triangle A_{1}MN$
with the other two sides $\|A_{1}N\|=B_{3}$ and $\|MN\|=B_{1}.$

Thus, lemma~\ref{lem1} yields $\angle
A_{3}A_{1}A_{2'}=\alpha_{312'}=\pi-\alpha_{203}.$

We need to calculate $a_{12'},$ in order to find the location of
$A_{2}'.$

Take a point $Q$ to the line defined by $A_{1}A_{3},$ such that
$\|A_{1}A_{3}\|=a_{13},$ $\|A_{2}G\|=B_{3}$ where
$\|A_{1}Q\|=a_{13}+B_{3}$ and construct with a ruler and compass
the perpendicular linear segment at the point $Q$ to the line
defined by  $A_{1}A_{3}$ and take a point $R$ such that
$\\|RQ\|=B_{3}.$ We denote by $P$ the point of intersection of the
line defined by $A_{3}R$ and the perpendicular line at the point
$A_{1}$ with respect to the line defined by $A_{1}A_{3}$
(fig.~\ref{fig4}). Taking into account the similar triangle
$\triangle A_{1}A_{3}P$ and  $\triangle A_{3}QR,$ we get:

\begin{equation}\label{eqnalc1}
a_{12'}=a_{13}\frac{B_{3}}{B_{2}}.
\end{equation}

\begin{figure}
\centering
\includegraphics[scale=0.8]{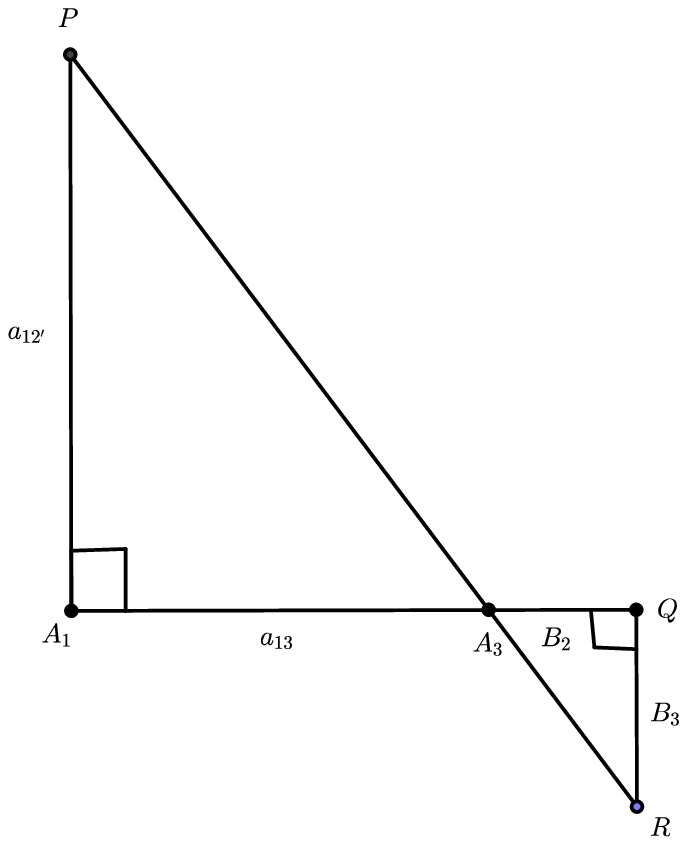}
\caption{}\label{fig4}
\end{figure}

\end{proof}

\begin{problem}\label{secgeomprob}Solve Problem~1 (Weighted Fermat-Torricelli problem)  by generalizing Hofmann's rotation, under the condition
(\ref{floatingcase}) and  $B_{1}=B_{2},$ such that
$B_{1}+B_{2}+B_{3}=1,$ $B_{1}>\frac{1}{4}$ and
$\alpha_{132}>\pi-\arccos\left(-1+\frac{(1-2B_{1})^{2}}{2B_{1}^{2}}\right).$
\end{problem}

\begin{proof}[Solution of Problem~\ref{secgeomprob}:]
\begin{figure}
\centering
\includegraphics[scale=0.8]{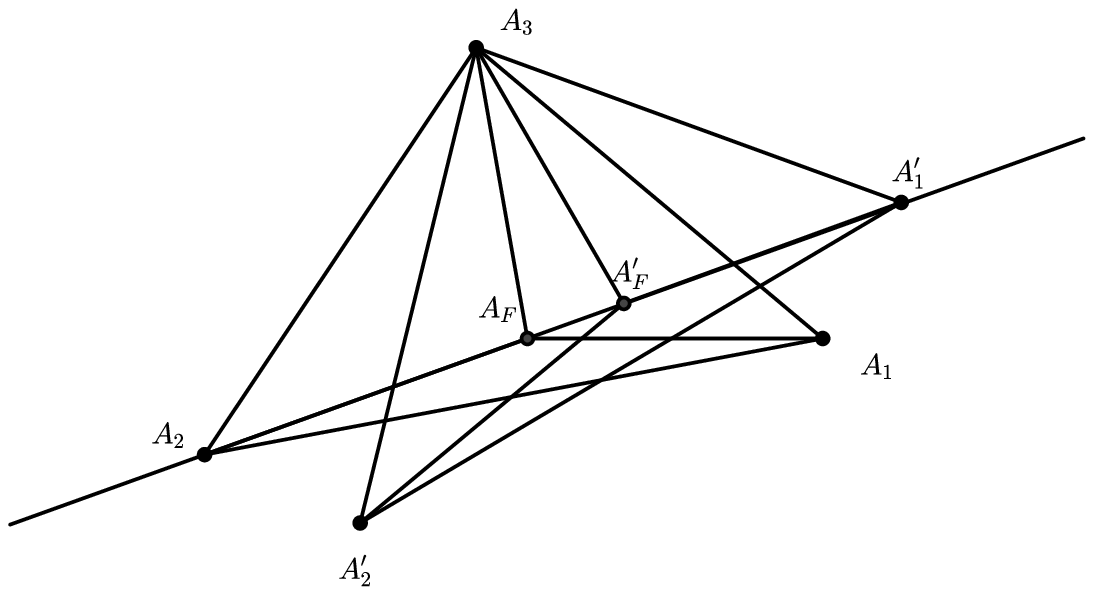}
\caption{}\label{fig5}
\end{figure}

We consider a weight $B_{i}$ which corresponds to the vertex
$A_{i}$ in $\mathbb{R}^{2},$ for $i=1,2,3.$

By replacing $B_{1}=B_{2}$ in (\ref{alphai0j}) of
lemma~\ref{lem2}, we derive that:

\begin{equation}\label{alpha203103}
\alpha_{203}=\alpha_{103}=\arccos\left(1-\frac{1}{2B_{1}}\right)
\end{equation}

and

\begin{equation}\label{alpha102000}
\alpha_{102}=\arccos\left(-1+\frac{(1-2B_{1})^{2}}{2B_{1}^{2}}\right)
\end{equation}
which yields $B_{1}>\frac{1}{4}.$

Taking into account (\ref{alpha203103}) and (\ref{alpha102000}),
we rotate the triangle $\triangle A_{1}A_{2}A_{3}$ about $A_{3}$
through $\pi-\alpha_{102}=2\alpha_{103}-\pi rad$ and we obtain the
triangle $\triangle A_{3}A_{1}^{\prime}A_{2}^{\prime}.$ Let
$A_{F}^{\prime}$ be the corresponding weighted Fermat-Torricelli
point of $\triangle A_{1}^{\prime}A_{2}^{\prime}A_{3},$ for
$B_{1}^{\prime}=B_{1}$ and $B_{2}=B_{2}^{\prime}$
(fig.~\ref{fig5}). Thus, the points $A_{2},$ $A_{F},$
$A_{F}^{\prime}$ and $A_{3}^{\prime}$ are collinear
(fig.~\ref{fig5}), because $\triangle A_{F}A_{F}^{\prime}A_{3}$ is
an isosceles triangle and \[\angle A_{F}A_{F}^{\prime}A_{3}=\angle
A_{F}^{\prime}A_{F}A_{3}=\pi-\alpha_{103}.\]

\end{proof}

%----------------------------------------------------------

\begin{corollary}{\cite{Hofmann:27}, \cite{Tikh:86},
\cite{BolMa/So:99}}\label{corhof} For $B_{1}=B_{2}=B_{3}$ the
solution of Problem~\ref{secgeomprob} is given by rotating the
$\triangle A_{1}A_{2}A_{3}$ about $A_{3}$ through $60^{\circ}.$
\end{corollary}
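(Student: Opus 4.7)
The plan is to derive Corollary \ref{corhof} as a direct arithmetic specialization of the rotation construction carried out in the solution of Problem \ref{secgeomprob}. First I would substitute $B_1 = B_2 = B_3$ into formula (\ref{alphai0j}) of Lemma \ref{lem2}. With equal weights each ratio $(B_k^2 - B_i^2 - B_j^2)/(2B_i B_j)$ collapses to $-\tfrac{1}{2}$, so
$$\alpha_{102} = \alpha_{103} = \alpha_{203} = \arccos\!\left(-\tfrac{1}{2}\right) = \tfrac{2\pi}{3}.$$
This is the only numerical input needed from the preceding machinery.

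Next I would feed this value into the rotation angle prescribed in the solution of Problem \ref{secgeomprob}, namely $\pi - \alpha_{102} = 2\alpha_{103} - \pi$. Both expressions yield $\pi - \tfrac{2\pi}{3} = \tfrac{\pi}{3} = 60^{\circ}$, which is exactly Hofmann's classical rotation. The remainder of the argument transfers verbatim from the solution of Problem \ref{secgeomprob}: one rotates $\triangle A_1 A_2 A_3$ about $A_3$ through $60^{\circ}$ to produce $\triangle A_3 A_1' A_2'$ with weighted Fermat-Torricelli point $A_F'$, and the auxiliary triangle $\triangle A_F A_F' A_3$, which was isosceles with apex angle $\pi - \alpha_{102}$ in the general case, now degenerates to an equilateral triangle. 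The relation $\angle A_F A_F' A_3 = \angle A_F' A_F A_3 = \pi - \alpha_{103} = \tfrac{\pi}{3}$ immediately yields the collinearity of $A_2, A_F, A_F', A_3'$, so $A_F$ is located as the intersection of the line $A_2 A_3'$ with the arc through the rotation.

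The only routine point to verify is that the equal-weight case meets the hypotheses of Problem \ref{secgeomprob}. After normalizing so that $B_1 + B_2 + B_3 = 1$, one has $B_1 = \tfrac{1}{3} > \tfrac{1}{4}$, and the angular hypothesis $\alpha_{132} > \pi - \arccos\!\bigl(-1 + (1-2B_1)^2/(2B_1^2)\bigr)$ reduces to $\alpha_{132} > \tfrac{\pi}{3}$, which is compatible with the weighted floating regime (in fact is implied by it together with the remaining two angles summing to less than $\tfrac{2\pi}{3} \cdot 2$). I do not anticipate any genuine obstacle: the corollary is exactly the historical case that motivated the generalized construction, and the proof is a one-line substitution once Lemma \ref{lem2} and the solution of Problem \ref{secgeomprob} are in hand.
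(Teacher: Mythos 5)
Your proof is correct and takes essentially the same route as the paper, which simply substitutes equal weights into the solution of Problem~\ref{secgeomprob} (via Lemma~\ref{lem2}, giving $\alpha_{102}=\alpha_{103}=120^{\circ}$) to obtain the rotation angle $\pi-120^{\circ}=60^{\circ}$. One small correction: your parenthetical claim that the floating case forces $\alpha_{132}>\tfrac{\pi}{3}$ is false (a triangle with angles $100^{\circ},50^{\circ},30^{\circ}$ and equal weights is in the floating case yet has $\alpha_{132}=30^{\circ}$); this is harmless here only because the angular condition is among the stated hypotheses of Problem~\ref{secgeomprob}, so it is assumed rather than derived.
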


\begin{proof}
By replacing $B_{1}=B_{2}=B_{3}=1$ in the solution of
Problem~\ref{secgeomprob}, we deduce that the rotation about
$A_{3}$ need to be $\pi-120^{\circ}=60^{\circ}<\alpha_{132}$
(Hofmann's rotation).
\end{proof}

The author is sincerely grateful to Professor Dr. Vassilios G.
Papageorgiou for his very valuable comments, fruitful discussions
and for his permanent attention to this work.

\appendix

\section{}
We mention two methods of the length of a linear segment with
respect to (I) a variable length and (II) a variable angle, which
have been used, in order to find the weighted Fermat-Torricelli
point.

I. A method of differentiating the length of a linear segment with
respect to the length of a variable linear segment is given first
in \cite[Proposition~2.6, (b), Remark~2.4,
Corollary~3.3]{Zachos/Zou:08}, \cite[formula~(4),
p.~413]{Zachos/Zou:08b} and has been explained in detail in
\cite{Cots/Zach:11}, \cite[Corollary~2]{Zachos/Cots:10}.
Specifically, by differentiating (\ref{i23cos}) with respect to
$a_{1},$ and by replacing in the derived equation
$\cos(\alpha_{213}-\alpha_{013})$ taken from (\ref{i23cos}) , we
obtain:
\begin{equation}\label{dera2a1}
\frac{\partial a_2}{\partial a_{1}}=\cos(\alpha_{102}).
\end{equation}
Similarly, by differentiating (\ref{i23cos3}) with respect to
$a_{1},$ and by replacing in the derived equation
$\cos(\alpha_{013})$ taken from (\ref{i23cos3}) , we obtain:
\begin{equation}\label{dera3a1}
\frac{\partial a_3}{\partial a_{1}}=\cos(\alpha_{103}).
\end{equation}
We mention a method of differentiating the length of a linear
segment with respect to a variable angle, which have been used, in
order to find the weighted Fermat-Torricelli point
(\cite[Proposition~2.6 (b)]{Zachos/Zou:08}) in $\mathbb{R}^{2}.$
By mentioning this technique of differentiation, we correct some
typographical errors which appear in \cite{Zachos/Zou:08}.
Specifically, by differentiating (\ref{i23cos}) with respect to
$\alpha_{013},$ we get:

\begin{equation}\label{dera2013}
\frac{\partial a_{02}}{\partial
\alpha_{013}}=-a_{01}\frac{a_{12}}{a_{02}}\sin(\alpha_{213}-\alpha_{013})
\end{equation}
From the sine law in $\triangle A_{1}A_{0}A_{2},$ we get:

\begin{equation}\label{sin102}
\frac{a_{12}}{\sin(\alpha_{102})}=\frac{a_{02}}{\sin(\alpha_{213}-\alpha_{013})}
\end{equation}

By replacing (\ref{sin102}) in (\ref{dera2013}), we obtain:
\begin{equation}\label{dera2a013}
\frac{\partial a_{02}}{\partial
\alpha_{013}}=-a_{01}\sin(\alpha_{102}).
\end{equation}
Similarly, by differentiating (\ref{i23cos3}) with respect to
$\alpha_{013},$ we obtain:
\begin{equation}\label{dera3a013}
\frac{\partial a_{03}}{\partial
\alpha_{013}}=a_{01}\sin(\alpha_{103}).
\end{equation}

%\listoffigures
%\clearpage
\end{document}